\documentclass[10pt]{amsart}

\calclayout

\usepackage{amsmath, amssymb}
\usepackage{amsfonts}
\usepackage{mathrsfs}
\usepackage[color, arrow,matrix,curve,cmtip,ps]{xy}
\usepackage{paralist}
\usepackage{quiver}
\usepackage{amsthm}
\usepackage{caption}
\usetikzlibrary{arrows,calc,matrix}
\tikzset{
curvarr/.style={
  to path={ -- ([xshift=2ex]\tikztostart.east)
    |- (#1) [near end]\tikztonodes
    -| ([xshift=-2ex]\tikztotarget.west)
    -- (\tikztotarget)}
  }
}
\tikzset{%
    symbol/.style={%
        draw=none,
        every to/.append style={%
            edge node={node [sloped, allow upside down, auto=false]{$#1$}}}
    }
}

\usepackage[final]{pdfpages}
\usepackage{dsfont,txfonts}
\usepackage{rotating}
\PassOptionsToPackage{hyphens}{url}\usepackage{hyperref}
\usepackage{enumitem}
\usepackage{graphicx}
\usepackage{mathtools}
\usetikzlibrary{arrows,chains,matrix,positioning,scopes}

\usepackage[T1]{fontenc}
\usepackage[variablett]{lmodern}
\usepackage{xcolor}
\usepackage{lipsum}
\usepackage[most]{tcolorbox}

\newtheorem{theorem}{Theorem}[section]
\newtheorem{lemma}[theorem]{Lemma}

\newtheorem{corollary}[theorem]{Corollary}
\theoremstyle{definition}

\newtheorem{remark}[theorem]{Remark}

\newtheorem{question}[theorem]{Question}

\everymath{\displaystyle}
\allowdisplaybreaks

\newcommand{\Map}{\mathrm{Map}}

\newcommand{\ab}{\mathrm{ab}}

\newcommand{\trc}{\mathrm{trc}}

\newcommand{\Z}{\mathbb Z}
\newcommand{\Res}{\operatorname{Res}}
\newcommand{\tr}{\operatorname{tr}}

\begin{document}
\title[The Becker--Gottlieb transfer is not functorial]{The Becker--Gottlieb transfer is not functorial: \\ An example based on units of group rings}
\author{Georg Lehner}
\author{Cary Malkiewich}
\author{Florian Naef}
\subjclass[2020]{Primary 55R12; Secondary 19D10, 19B28}
\keywords{Stable homotopy theory, Becker--Gottlieb transfer,
algebraic $K$-theory of spaces,
integral group rings, finitely dominated spaces}
\begin{abstract}
We show by explicit counterexample that the Becker--Gottlieb transfer cannot be made functorial.
A different counterexample using rational homotopy theory was discovered concurrently by the third author and Robin Stoll.
\end{abstract}
\maketitle

\setcounter{section}{1}

A \emph{perfect fibration} is a fibration of topological spaces $p\colon X \rightarrow Y$ whose fibers are finitely dominated, i.e.~a retract up to homotopy of a finite CW complex. For each perfect fibration, there is a \emph{Becker-Gottlieb transfer} $p^!$, a wrong-way map in the stable homotopy category of the form
\[ \begin{tikzcd} p^!\colon \Sigma^\infty_+ Y \rar &  \Sigma^\infty_+ X. \end{tikzcd} \]
Informally, it is defined by summation along the fibers. More precisely, it is defined as the pushforward along $Y \to *$ of the trace of the diagonal map $X \to X \times_Y X$ in the symmetric monoidal category of parametrized spectra over $Y$. See e.g.~ \cite{becker1976transfer}, \cite[15.2.1]{ms}, \cite[7.1.5]{malkiewich_parametrized}, \cite[\S 4]{KleinMalkiewich2018}, \cite[Ex 4.9]{lm}, or \cite[\S 6]{CarmeliCnossenRamziYanovski2025} for more details. In the special case that $p$ is a finite-sheeted covering space, this transfer can be described in many other equivalent ways, and appears all over stable homotopy theory.

In this paper we address the open question of whether the Becker--Gottlieb transfer is functorial:
\begin{question}\label{main_question}
    Given a pair of composable perfect fibrations
    \[ \begin{tikzcd}
        X \rar{p} & Y \rar{q} & Z,
    \end{tikzcd} \]
do we have an equality $(q \circ p)^! = p^! \circ q^!$ as morphisms in the stable homotopy category?
\end{question}

For a long time it seemed that Question \ref{main_question} should have a positive answer. It is easily seen to be true for finite-sheeted covers, a fact that has been used extensively in calculations. It is also known to be true for smooth fiber bundles with compact manifold fibers, or more generally fiberwise compact ENRs, by a theorem of Dold \cite[Thm 8.7]{dold_fixed_point_transfer}. Forthcoming work of Ramzi--Volpe--Wolf generalizes this even further using the new technology of dualizable stable $\infty$-categories. However, none of these approaches have led to a positive answer in the most general case of perfect fibrations.

Question \ref{main_question} attracted considerable interest in the previous decade, culminating in a claimed proof of functoriality for all perfect fibrations by the second author and John Klein \cite{KleinMalkiewich2018}. Subsequently, Carmeli and Cnossen found a mistake in this proof \cite{KleinMalkiewich2022Corrigendum}, and indeed the proof strategy cannot be correct for formal reasons, as explained in \cite[\S 6]{CarmeliCnossenRamziYanovski2025}.
Later work of the second author with Klein and Ramzi shows that the Becker-Gottlieb transfer is functorial on the level of $\pi_0$ \cite{KleinMalkiewichRamzi2023}, but the question of functoriality in general has remained open.

In this paper we resolve Question \ref{main_question} in the negative: the transfer cannot be made functorial along arbitrary pairs of perfect fibrations.
\begin{theorem}\label{main_thm}
    There exists a pair of perfect fibrations $p\colon X \rightarrow Y$, $q\colon Y \rightarrow Z$ such that $( q \circ p)^! \neq p^! \circ q^!$ as maps in the stable homotopy category.
\end{theorem}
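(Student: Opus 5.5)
We will look for a counterexample in which both fibrations have finitely dominated but \emph{non-finite} fibers. The idea is that the failure of functoriality should be detected by Wall's finiteness obstruction, i.e.~by classes in $\widetilde K_0(\mathbb Z\pi)$, and more precisely by how such classes interact with units of group rings. In the title's spirit, we take $Z = S^1$ or a similar low-dimensional base, $Y \to Z$ with fiber $F$ a finitely dominated space with fundamental group $\pi$, and monodromy given by a self-equivalence of $F$. The point is that the transfer $q^!\colon \Sigma^\infty_+ Z \to \Sigma^\infty_+ Y$ restricted to the fundamental class of the circle records a trace-type invariant of the monodromy. This is essentially the class of the monodromy in $K_1$ of the parametrized category, mapped through the Dennis trace to $\pi_1$ of $\Sigma^\infty_+ LY$ or $\Sigma^\infty_+ Y$.

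\textbf{Step 1: reduce to a trace computation.} After composing with the map to $\Sigma^\infty_+ Z$ (or with an $S^1$-factor), we identify $p^!\circ q^!$ and $(q\circ p)^!$ on the summand $\pi_1^s$ coming from $S^1 \subset Z$. Each is given by a Dennis trace of a $K_1$-class: the automorphism of the fiber complex over $\mathbb Z\pi$ induced by the monodromy. For the composite, the relevant fiber is the total space of $p$ restricted to a point, and its monodromy is an automorphism over a bigger group ring. For $p^! \circ q^!$ we instead get a ``trace of a trace''.

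\textbf{Step 2: isolate the discrepancy.} In $K$-theory the two answers should differ by a product of the form $[P]\cdot[u]$. Here $[P] \in \widetilde K_0(\mathbb Z\pi)$ is the Wall obstruction of a fiber, and $u$ is a unit of a group ring $\mathbb Z[G]$ arising as the monodromy. The formal ``additivity'' argument that would make these agree uses finite cell structures, and it breaks exactly because $[P]\neq 0$. Concretely, we choose $\pi$ with $\widetilde K_0(\mathbb Z\pi)\neq 0$, e.g.~a cyclic group of order $23$. We realize a projective module $P$ by a finitely dominated space, using Wall's realization for a $2$-complex. We then let the monodromy act on $P$ by multiplication by a nontrivial unit of $\mathbb Z[C_n]$ (for instance $C_5$) in a central factor.

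\textbf{Step 3: detect it.} Compute the resulting difference in $\pi_1^s(\Sigma^\infty_+ X)\cong \pi_1^s \oplus H_1(X)\oplus \dots$, or via the trace map into $HH_0$. This gives the Hattori--Stallings-type element of $H_1$, of the form $\mathrm{rank}$-twisted $u$ in $\mathbb Z\pi^{\mathrm{ab}}$ with coefficients involving the trace of $[P]$.

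The main obstacle is Step 3. Hattori--Stallings traces of projective modules over finite groups are rationally trivial off the identity, so a naive detection in $HH_0$ vanishes. We would therefore try detecting in $H_1(X)$ via $K_1 \to (\mathbb Z G)^\times \to G^{\ab}$ after tensoring with $P$, or via a mod-$p$ or determinant invariant, and verify nonvanishing by an explicit unit calculation.
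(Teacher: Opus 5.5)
Your proposal has a genuine gap: it does not yet prove anything. The second fibration $p$ is never specified, and no class is ever evaluated under both $(q\circ p)^!$ and $p^!\circ q^!$. Step 3, the only place where a counterexample could actually be certified, is left open, as you acknowledge.

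More seriously, the heuristic meant to guide the construction is unjustified and points at the wrong invariant. Step 2 asserts without argument that the discrepancy has the form $[P]\cdot[u]$ and that functoriality fails ``exactly because $[P]\neq 0$.'' In the paper's counterexample, however, all fibers are finite: $q\colon Y\to S^1$ has fiber the finite complex $V=K\vee S^2$, and $p$ is a $3$-sheeted cover, so every Wall obstruction vanishes. What breaks functoriality is a $K_1$-phenomenon. The monodromy $\phi_\alpha$ is a homotopy equivalence with nonzero Whitehead torsion $[\alpha]\in\mathrm{Wh}(G)$, and this is exactly what prevents replacing $q$ by a bundle with compact fibers and applying Dold's argument.

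Your own remark in Step 3 in fact undercuts Step 2. Write $P$ as the image of an idempotent matrix $e$, so that a central unit $u$ with $\epsilon(u)=1$ acting on $P$ is represented by $ue+(1-e)$. The trace formula $M\mapsto(\det\epsilon(M),\log\tr(M\epsilon(M)^{-1}))$ then gives $(+1,\mathrm{rank}(P)\cdot\log u)$, because the $\log\tr(e)$ terms cancel. So on the $\trc_Y$ side the class of $P$ is invisible beyond its rank. This gives no reason to expect $\widetilde K_0$ to be the source of any discrepancy, and the problem collapses to finding a suitable unit.

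What the argument actually needs has three parts.
\begin{enumerate}[(i)]
\item Raptis's theorem: if transfers always composed, then $\trc_X\circ\tr_p=p^!\circ\trc_Y$ on homotopy groups for every perfect $p$. Your Step 1 is a vague form of the Lind--Malkiewich factorization behind this.
\item Explicit $\pi_1$-formulas for $\Res$, for $\trc$ on $K_1$ (as above), and for $p^!$ on a finite cover, namely the index power, the Schur transfer, and the cross-term.
\item A concrete finite cover and unit on which the square fails. The paper takes $G=C_3\rtimes C_{16}$, $H=C_{16}$, and $\alpha=1+(1-s^8)(a-1)(1-s-s^{-1})$. Since $\alpha$ maps to $1$ in $\Z[G^{\ab}]$, one gets $p^!\trc_G[\alpha]=(+1,0)$. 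On the other hand, $\Res(\alpha)$ is a $3\times 3$ matrix over $\Z[C_{16}]$ with augmentation the identity and trace $3s^8$, so $\trc_H\Res[\alpha]=(+1,8)\in\{\pm1\}\oplus\Z/16$.
\end{enumerate}
Only after this does one build the fibrations: the mapping torus of $\phi_\alpha$ on $V$, and the cover corresponding to $H\times\Z\le G\times\Z$.

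Nothing in your plan plays the role of (iii), and that is the heart of the theorem. Your suggested central unit from a cyclic direct factor comes with no computation showing any mismatch. Without an explicit unit and cover for which the trace of the restricted matrix differs from the transfer of the trace, there is no counterexample.
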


In fact, we give an explicit counterexample in which $p\colon X \rightarrow Y$ is a finite-sheeted covering space and $q\colon Y \rightarrow S^1$ is a fibration with finite CW fiber $V$.
The monodromy map $V \to V$ is a homotopy equivalence but not a simple homotopy equivalence, and therefore $q$ is not weakly equivalent to a \emph{fiber bundle} with finite CW fiber. This is what prevents the usual geometric argument in e.g.~\cite[Thm 8.7]{dold_fixed_point_transfer} from proving that $(q \circ p)^! = p^! \circ q^!$.

At the same time that this investigation was carried out, the third author and Robin Stoll constructed a different counterexample in which $p$ is an $S^1$-bundle and $q$ is a fibration with base $S^{n+1}$ and with finite CW fiber \cite{naef_stoll_counterexample}. Since our counterexamples are based on different frameworks, we have written them up as separate papers, but we emphasize that the discovery of these counterexamples happened at the same time. The presence of two unrelated counterexamples now leaves little doubt that the answer to Question \ref{main_question} is negative.

\begin{remark}\label{pullback_functoriality}
Although we disprove Question \ref{main_question} in general, it is true in many if not most cases of interest, and there is still much work to do in giving a complete accounting of when functoriality holds. For instance, \cite{KleinMalkiewich2022Corrigendum} shows that there can be no counterexample in which $q$ is a finite-sheeted covering space, but our counterexample shows that $p$ may be a finite-sheeted covering space. Forthcoming work of Ramzi--Volpe--Wolf also proves functoriality for proper, locally contractible maps between locally contractible and locally compact Hausdorff spaces, generalizing \cite[Thm 8.7]{dold_fixed_point_transfer}.

We also point out that the Becker--Gottlieb transfer is well-known to be a functor in a different sense: for each homotopy pullback square of spaces in which the vertical maps are perfect fibrations
\[ \begin{tikzcd}
    W \dar{q} \rar{r} & X \dar{p} \\
    V \rar{s} & Y,
\end{tikzcd} \]
we have an equality $(\Sigma^\infty_+ r) \circ (q^!) = (p^!) \circ (\Sigma^\infty_+ s)$ as maps in the stable homotopy category. This is straightforward to prove from the formal description of the transfer in e.g.~\cite[Ex 4.9]{lm} or \cite[\S 6]{CarmeliCnossenRamziYanovski2025}.
\end{remark}

\subsection*{AI declaration}

The authors developed the strategy for the counterexample, and the computational lemmas involved, without the use of AI in 2023--2024. The authors then used a large language model to find the counterexample itself.
The model was also used to translate this counterexample into the pair of maps for which the Becker--Gottlieb transfer does not compose. However, we stress that the text presented here has been entirely written by the three authors, and it reflects their human understanding of the investigation. All computations have been independently verified, and in some cases simplified from the LLM version.

\subsection*{Acknowledgements}
We would like to thank Robin Stoll for agreeing to release \cite{naef_stoll_counterexample} at the same time as the present paper.

The first author was funded by the Deutsche Forschungsgemeinschaft (DFG, German Research Foundation) – Project-ID 427320536–SFB 1442, as well as under Germany's Excellence Strategy EXC 2044/2 - 390685587, Mathematics Münster: Dynamics-Geometry-Structure. The second author was partially supported by the National Science Foundation (DMS-2506430) and Simons Foundation International [SFI-MPS-TSM-00013425, CM].

\section{Proof of the main theorem}

Our strategy for the disproof of Question \ref{main_question} is to use the following result of Raptis.

\begin{theorem}[{\cite[Theorem 4.2]{Raptis2021TransferMaps}}]
Assume that for any two perfect fibrations $p\colon X \rightarrow Y$, $q\colon Y \rightarrow Z$, the equality $( q \circ p)^! = p^! \circ q^!$ always holds as morphisms in the stable homotopy category. Then for any perfect fibration $p\colon X \rightarrow Y$, the square of homotopy groups
\begin{equation}\label{raptis_square}
\begin{tikzcd}
	{\pi_n(A(Y))} & {\pi_n(A(X))} \\
	{\pi_n(\Sigma^\infty_+(Y))} & {\pi_n(\Sigma^\infty_+(X))}
	\arrow["{\mathrm{tr}_p}", from=1-1, to=1-2]
	\arrow["{\trc_Y}"', from=1-1, to=2-1]
	\arrow["{\trc_X}", from=1-2, to=2-2]
	\arrow["{p^!}", from=2-1, to=2-2]
\end{tikzcd}
\end{equation}
commutes for every $n \geq 0$. Here
\begin{itemize}
    \item $\mathrm{tr}_p\colon A(Y) \rightarrow A(X)$ is the transfer on Waldhausen's algebraic $K$-theory of spaces \cite{1126} which sends each retractive space $Z$ over $Y$ to the pullback $p^*Z = X \times_Y Z$, and
    \item $\trc_X\colon A(X) \rightarrow \Sigma^\infty_+ X $ is the projection from $A$-theory back to stable homotopy.
\end{itemize}
\end{theorem}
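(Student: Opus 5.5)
Since this is Raptis' theorem and is cited, I only need to sketch the argument for commutativity of \eqref{raptis_square}. The plan is to read an element of $\pi_n(A(Y))$ as a perfect family over a sphere, push it into Question~\ref{main_question} as a composable pair of perfect fibrations, and compute both composites via transfers. The computation rests on two ingredients:
\begin{itemize}
    \item the pullback naturality of Remark~\ref{pullback_functoriality};
    \item the identification of the trace map with a Becker--Gottlieb transfer.
\end{itemize}

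The second ingredient is the classical result, due to Dwyer--Weiss--Williams and made precise in the form we need in \cite{KleinMalkiewich2018}/\cite{KleinMalkiewichRamzi2023}. Let $E \to B$ be a perfect fibration with a map $c\colon E \to Y$. It determines a class in $\pi_0\Map(B, A(Y))$, the ``characteristic class'' $\chi(E)$. The composite $\Sigma^\infty_+ B \to \Sigma^\infty_+ Y$ obtained from $\trc_Y \circ \chi(E)$ agrees with $\Sigma^\infty_+ c \circ (E\to B)^!$. I take this as the Riemann--Roch-type input.

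With that in hand, the proof proceeds as follows.

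\textbf{Step 1: represent the class.} Represent an element of $\pi_n(A(Y))$ by a map $S^n \to A(Y)$. Using the equivalence $A(Y) \simeq$ the $K$-theory of finitely dominated retractive spaces, arrange (after adding a constant family and stabilizing) that it comes from a perfect fibration $E \to S^n$ equipped with a map $E \to Y$. Factoring $E \to S^n\times Y$, I view $E$ as a perfect fibration $r\colon E \to S^n \times Y$ composed with the projection.

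\textbf{Step 2: apply $\mathrm{tr}_p$.} By definition, $\mathrm{tr}_p$ sends this family to the pullback $E' = E \times_Y X$. Here $E' \to S^n \times X$ is the pullback of $r$ along $1\times p$, and $1\times p$ is itself a perfect fibration.

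\textbf{Step 3: compare the two composites.} Up to a reduction to $\pi_n$ via the standard stabilization and suspension arguments, the lower-left composite becomes $p^!\circ(\text{proj})_*\circ r^!$. The upper-right composite becomes $(\text{proj})_*\circ r'^!$, where $r'\colon E'\to S^n\times X$ is the pulled-back fibration. Pullback naturality (Remark~\ref{pullback_functoriality}) together with functoriality of $\Sigma^\infty_+$ rewrites each side as a composite of transfers along the square $E'\to E$, $S^n\times X \to S^n\times Y$. The two sides then differ exactly by the comparison
\[
(\pi\circ r)^! \quad\text{versus}\quad r^!\circ \pi^!,
\]
where $\pi$ is the relevant perfect fibration. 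The composites to compare are the two ways around $E' \to S^n\times X \to S^n\times Y$ versus $E' \to E \to S^n \times Y$. The hypothesis supplies the equalities
\[
(r\circ p')^! = p'^!\circ r^!, \qquad ((1\times p)\circ r')^! = r'^!\circ(1\times p)^!,
\]
where $p'\colon E' \to E$ is the pullback of $p$. Both composites equal the same map $(E'\to S^n\times Y)^!$, so they agree.

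\textbf{Main obstacle.} The real difficulty is Step 1 together with the trace--transfer identification: making precise that $\trc$ applied to the class of a family equals the Becker--Gottlieb transfer of that family, in the homotopy-group form needed for every $n$ rather than just $\pi_0$. I would handle this by working with the parametrized-spectra description of the transfer as a trace, and comparing it to the Dennis trace on $K$-theory of the category of perfect parametrized spectra over $Y$.
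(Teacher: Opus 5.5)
Your outline has the right shape, and it is the mechanism the paper reverse-engineers in Section 3 (the paper itself only cites Raptis). You realize a class by a perfect family, identify $\trc$ of its characteristic class with a Becker--Gottlieb transfer via \cite[Corollary 1.3]{lm}, and invoke the hypothesis. But Step 1 is not available as stated. $\Omega^\infty A(Y)$ is a group completion (a plus construction) of a moduli space of families of retractive spaces over $Y$, and plus constructions are generally not surjective on $\pi_n$ for $n\ge 2$: the map $BGL(\Z)\to BGL(\Z)^+$ is zero on $\pi_2$, while $K_2(\Z)=\Z/2$. ``Adding a constant family and stabilizing'' does not change this, so there is no reason a class in $\pi_n(A(Y))$ is the characteristic class of a perfect family over $S^n$. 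For $n\le 1$ it is fine, which is why the paper can realize its class by a mapping torus over $S^1$. What you actually need is weaker, and it suffices because you only have to check that the two composites in \eqref{raptis_square} agree after precomposition with each characteristic map. Every class is a sum of images of stable classes $\Sigma^\infty S^n\to\Sigma^\infty_+B$ under maps $c_*\circ\tr_\pi\circ\iota_B\colon\Sigma^\infty_+B\to A(Y)$. Here $\pi\colon E\to B$ is a perfect family over a \emph{general} base, $c\colon E\to Y$, and $\iota_B$ is the unit. This follows from a group-completion argument, using that $\Sigma^\infty_+$ of an acyclic map is an equivalence. That realization step, not the trace--transfer identification (already a spectrum-level theorem), is the nontrivial input you skipped.

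Step 3 also fails. The map $r\colon E\to S^n\times Y$ is not a perfect fibration in general. Its homotopy fibers are those of $E_b\to Y$, for instance $\Omega Y\simeq\Z$ when $E_b$ is a point and $Y=S^1$; objects of $A(Y)$ are finite relative to $Y$, not fiberwise finite over $Y$. So $r^!$ and both equalities you extract from the hypothesis are undefined. Your two composites $p^!\circ(\mathrm{proj})_*\circ r^!$ and $(\mathrm{proj})_*\circ r'^!$ also have different sources, $\Sigma^\infty_+(S^n\times Y)$ and $\Sigma^\infty_+(S^n\times X)$, instead of the base. The argument closes without $r$. Put $p'\colon E'=E\times_Y X\to E$ and $c'\colon E'\to X$. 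Naturality of $\trc$, \cite[Corollary 1.3]{lm}, and Remark~\ref{pullback_functoriality} give
\[ p^!\circ\trc_Y\circ c_*\circ\tr_\pi\circ\iota_B=p^!\circ\Sigma^\infty_+c\circ\pi^!=\Sigma^\infty_+c'\circ p'^!\circ\pi^!. \]
Base change $\tr_p\circ c_*=c'_*\circ\tr_{p'}$, functoriality of the $A$-theory transfer, and \cite[Corollary 1.3]{lm} give $\trc_X\circ\tr_p\circ c_*\circ\tr_\pi\circ\iota_B=\Sigma^\infty_+c'\circ(\pi\circ p')^!$. The hypothesis is then needed exactly once, for the composable pair of perfect fibrations $E'\xrightarrow{p'}E\xrightarrow{\pi}B$, not for factorizations through $S^n\times Y$.
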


More precisely, we construct $\trc_X$ by taking the Dennis trace map from $A(X)$ to topological Hochschild homology $\mathrm{THH}(X) \simeq \Sigma^\infty_+ LX$, where $LX = \Map(S^1,X)$ is the free loop space of $X$, and then we compose with the map $LX \to X$ that evaluates a loop at the basepoint of $S^1$. 
Therefore, to show that the Becker--Gottlieb transfer is not functorial, it suffices to find an element of $\pi_1(A(Y))$ on which the above square fails to commute. To set this up, recall that for any based connected space $X$ we have an isomorphism $\pi_1(A(X)) \cong K_1(\mathbb{Z}[\pi_1(X)])$, which is the group of all invertible $\mathbb{Z}[\pi_1(X)]$-matrices up to elementary row operations. We also have a splitting $\pi_1( \Sigma^\infty_+ X) \cong \{ \pm 1\} \oplus \pi_1(X)^\ab$, defined using the basepoint of $X$ to split off the copy of $\pi_1(\Sigma^\infty_+ *) \cong \{\pm 1\}$.

The following three lemmas allow us to compute the maps in the square \eqref{raptis_square} for any finite-sheeted covering space $X \to Y$ at the level of $\pi_1$. The first is a direct consequence of the general description of the transfer in $A$-theory, see for instance \cite[Section 3]{Raptis2021TransferMaps}. 
\begin{lemma}\label{lemma1}
    Suppose $p\colon X \to Y$ is a finite-sheeted cover of connected CW complexes with fundamental groups $H = \pi_1(X)$, $G = \pi_1(Y)$, and $[G:H] = n$. Then on $\pi_1$, the $K$-theory transfer $\tr_p$ is the restriction map
    \[\begin{tikzcd}
        K_1(\mathbb{Z}[\pi_1(Y)]) \rar{\Res} & K_1(\mathbb{Z}[\pi_1(X)])
    \end{tikzcd} \]
    that regards each invertible $(k \times k)$ matrix over $\mathbb{Z}[\pi_1(Y)]$ as an automorphism of a free right $\mathbb{Z}[\pi_1(Y)]$-module, then forgets to the right $\mathbb{Z}[\pi_1(X)]$-action, giving an invertible $(nk \times nk)$ matrix.
\end{lemma}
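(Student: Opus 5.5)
We identify the $A$-theory transfer on $\pi_1$ by modelling $\pi_1 A(-)$ through retractive spaces and cellular chains on universal covers.

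\textbf{Model for $\pi_1 A(Y)$.} We use Waldhausen's model of $A(Y)$ as the $K$-theory of finite retractive spaces over $Y$. Under the linearization map $A(Y) \to K(\mathbb{Z}[G])$, which is an isomorphism on $\pi_1$, a retractive space $Z$ over $Y$ goes to the cellular chain complex $C_*(\widetilde{Z}, \widetilde{Y})$ of the pullback to the universal cover $\widetilde{Y}$. This is a complex of free right $\mathbb{Z}[G]$-modules, with one basis element for each relative cell of $Z$. A class in $\pi_1$ is then represented by a weak self-equivalence, or by a loop in the $K$-theory space. By the standard identification of $K_1$ with the abelianized general linear group, we may take the representative to be a based automorphism of a finite wedge $Y \vee \bigvee_k S^m$. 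Its effect on $C_m(\widetilde{Z},\widetilde{Y}) \cong \mathbb{Z}[G]^k$ is the given invertible matrix.

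\textbf{Computing the pullback.} By definition the transfer sends $Z$ to $p^*Z = X \times_Y Z$. The key step is to compute the relative cells of $p^*Z$ over $X$. Each relative cell $e$ of $Z$ has simply connected characteristic map, so it lifts to exactly $n$ cells of $p^*Z$, one for each sheet. The universal cover of $X$ is $\widetilde{Y}$ itself, now with its $H$-action. Therefore
\[ p^*Z \times_X \widetilde{X} = Z \times_Y \widetilde{Y} = \widetilde{Z}. \]
It follows that
\[ C_*\bigl(\widetilde{p^*Z}, \widetilde{X}\bigr) = C_*(\widetilde{Z},\widetilde{Y}) \]
as chain complexes. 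The only change is that the $G$-action is restricted to $H$.

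Choosing coset representatives $g_1,\dots,g_n$ for $H \backslash G$, we get an identification $\mathbb{Z}[G] \cong \mathbb{Z}[H]^n$ as right $\mathbb{Z}[H]$-modules, with basis the $g_i$ acting on the left. Each $G$-cell basis vector $e$ then gives the $n$ basis vectors $g_i e$, which are exactly the lifted cells. So a $\mathbb{Z}[G]$-linear automorphism of $\mathbb{Z}[G]^k$ is sent to the same map regarded as an automorphism of $\mathbb{Z}[H]^{nk}$. This is the stated restriction map $\Res$.

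\textbf{Where the care is needed.} The main obstacle is the bookkeeping: checking that the isomorphism $\pi_1 A(-) \cong K_1(\mathbb{Z}[\pi_1(-)])$ is natural with respect to this transfer. That requires two things.
\begin{itemize}
\item The pullback functor must be compatible with linearization. This holds up to a natural equivalence of exact functors, namely $C_*(\widetilde{p^*Z},\widetilde X) \cong \Res\, C_*(\widetilde Z,\widetilde Y)$, which is natural in $Z$, so it induces the same map on $K$-groups.
\item The choice of coset representatives and basepoints must not matter. Changing them conjugates the matrix by a permutation matrix twisted by elements of $H$, and such conjugation acts trivially on $K_1$.
\end{itemize}
I would cite \cite[Section 3]{Raptis2021TransferMaps} for the identification of $\tr_p$ with the exact pullback functor, and then carry out the chain-level comparison above.
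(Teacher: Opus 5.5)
Your proposal is correct and is essentially the paper's argument. The paper only cites \cite[Section 3]{Raptis2021TransferMaps} for identifying $\tr_p$ with the exact pullback functor $Z \mapsto X\times_Y Z$ and calls the lemma a direct consequence; your observation that $\widetilde X=\widetilde Y$ gives $C_*(\widetilde{p^*Z},\widetilde X)=\Res\, C_*(\widetilde Z,\widetilde Y)$ naturally in $Z$ is exactly the unwritten content of that remark. (A cosmetic point: for $\mathbb{Z}[G]$ to be free on the $g_i$ as a right $\mathbb{Z}[H]$-module, the $g_i$ should represent the left cosets $G/H$, not $H\backslash G$.)
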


Next we describe the projection from $A$-theory to stable homotopy. Define
\[ \epsilon\colon \Z[G] \to \Z, \qquad \log\colon \Z[G] \to G^\ab \]
as follows. The map called $\epsilon$ is the augmentation, induced by the homomorphism $G \to 1$. The map called $\log$ is the unique additive homomorphism that sends each element $g \in G$ to itself. We call this a ``logarithm'' because we think of elements of $G^\ab$ additively, so that $\log$ sends multiplication of elements of $G$ to addition of elements of $G^\ab$.
\begin{lemma}\label{lemma2}
    If $G = \pi_1(Y)$, then the projection map $\trc_Y\colon A(Y) \rightarrow \Sigma^\infty_+ Y$ is calculated on $\pi_1$ by sending each invertible matrix $M$ with entries in $\mathbb{Z}[G]$ to the element
    \[ \Big(\det(\epsilon(M)) \ , \ \log\big(\tr(M\epsilon(M)^{-1})\big)\Big) \in \{\pm 1\} \oplus G^\ab. \]
    In particular, when $\epsilon(M)$ is an identity matrix, this gives
    \[ \Big( +1 \ , \ \log\big(\tr(M)\big)\Big) \in \{\pm 1\} \oplus G^\ab. \]
\end{lemma}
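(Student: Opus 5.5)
The computation reduces to two facts about $\pi_1$ of the Dennis trace. We will first describe the trace map on $K_1$, and then describe the map on $\pi_1$ induced by evaluation $LY \to Y$ and by the splitting $\pi_1(\Sigma^\infty_+ Y) \cong \{\pm1\}\oplus G^\ab$. Concretely, write $\trc_Y$ as the composite
\[ A(Y) \to K(\mathbb{Z}[G]) \xrightarrow{\ \mathrm{Dennis}\ } \mathrm{HH}(\mathbb{Z}[G]) \]
together with the lift through $\mathrm{THH}(\Sigma^\infty_+\Omega Y)\simeq \Sigma^\infty_+ LY$. On $\pi_1$ the linearization map $A(Y)\to K(\mathbb{Z}[G])$ is the isomorphism recalled above.

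\textbf{Step 1: the component away from the sign.} For connected $Y$ we have $\pi_1\Sigma^\infty_+ LY \cong \bigoplus_{[g]} \pi_1 \Sigma^\infty_+ L_{[g]}Y$, summed over conjugacy classes. The Dennis trace on $K_1$ is classically the map $GL(\mathbb{Z}[G]) \to \mathrm{HH}_1(\mathbb{Z}[G])=\bigoplus_{[g]} H_1(C_G(g))$ given by $M\mapsto \tr(M^{-1}\otimes dM)$, or equivalently $\tr(dM\, M^{-1})$. Composing with evaluation at the basepoint sends $C_G(g)\to G$, and on $H_1$ it lands in $G^\ab$. We will check the formula on generators of $K_1$. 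For a unit $g\in G$ viewed as a $1\times1$ matrix, the image should be $g\in G^\ab$; this follows because the loop class is $g^{-1}\otimes g \mapsto [g]\in H_1(C_G(1))=G^\ab$. For elementary matrices the image must be trivial. The map $M\mapsto \log\tr(M\epsilon(M)^{-1})$ has both properties. The key point is to check that this map is a homomorphism on $GL(\mathbb{Z}[G])$ modulo elementary matrices, and that it agrees with the Dennis-trace image.

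\textbf{Step 2: the $\{\pm1\}$ component.} Naturality in $Y\to *$ shows that the $\{\pm1\}$ component equals $\trc_*$ applied to $\epsilon(M)\in K_1(\mathbb{Z})=\{\pm1\}$. The map $K_1(\mathbb{Z}) \to \pi_1\mathbb{S}=\mathbb{Z}/2$ is an isomorphism, since the unit map $\mathbb{S}\to A(*)$ is $1$-connected and splits via the trace. Hence this component is $\det\epsilon(M)$.

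\textbf{Step 3: reduction to augmentation-identity matrices.} Both sides of the claimed formula are homomorphisms $K_1(\mathbb{Z}[G])\to\{\pm1\}\oplus G^\ab$. Factor $M = (M\epsilon(M)^{-1})\cdot\epsilon(M)$, where $\epsilon(M)\in GL(\mathbb{Z})$ is included via $\mathbb{Z}\to\mathbb{Z}[G]$. The second factor is pulled back from a point, so its $G^\ab$ component vanishes and its sign is $\det\epsilon(M)$. It therefore suffices to treat $N=M\epsilon(M)^{-1}$, which has $\epsilon(N)=I$. Write $N=I+J$ with $J$ having entries in the augmentation ideal $I_G$.

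\textbf{Step 4: the main obstacle.} The hardest step is to identify the trace of $N$ with $\log\tr(N)$ modulo the relevant identifications. We use $I_G/I_G^2\cong G^\ab$ via $g-1\mapsto g$, which is consistent with $\log$ vanishing on $\mathbb{Z}$-multiples of $1$ only after adjusting constants. The precise claim is that the Dennis trace of $I+J$ equals the class of $\tr(J)$ in $I_G/I_G^2$, since higher-order terms lie in $I_G^2$. This holds because $d\log$ linearizes $N$ to first order in $J$. To verify it rigorously, I would use Whitehead's lemma to reduce $N$, modulo elementary matrices and commutators, to products whose trace is computable. Alternatively, I would use the known identification of the map $K_1(\mathbb{Z}[G])\to H_1(G)$ with the determinant to $\mathbb{Z}[G^\ab]^\times$ followed by the projection $\pm G^\ab\to G^\ab$, and compare it to $\log\tr$ on matrices with $\epsilon=I$.
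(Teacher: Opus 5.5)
\textbf{There is a genuine gap, and it is in the $G^{\mathrm{ab}}$-coordinate, which is where the content of the lemma lies.}

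Your Steps 2 and 3 are fine; Step 2 is the paper's argument for the sign, via naturality along $Y\to *$. The map $M\mapsto \log\tr(M\epsilon(M)^{-1})$ is also indeed a homomorphism on $K_1$, because $\log$ kills $I_G^2$. But Step 1 tries to identify it with the trace by checking on units $\pm g$ and elementary matrices, and these do not generate $K_1(\mathbb{Z}[G])$. The quotient by their classes is $\mathrm{Wh}(G)$, which is nonzero in general, so agreement there says nothing about the classes that matter.

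Concretely, the paper applies the lemma to $\Res[\alpha]\in K_1(\mathbb{Z}[C_{16}])$. Its determinant $4-3s^8-3\beta^2$ is not of the form $\pm s^k$, so this class lies outside the subgroup you check.

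Step 4 does not repair this:
\begin{itemize}
\item ``$d\log$ linearizes $N$ to first order'' is a heuristic, not an argument.
\item Whitehead's lemma only identifies $E(\mathbb{Z}[G])$ with the commutator subgroup. It does not express $N$ through computable generators.
\item The ``known identification'' with the determinant presupposes the claim. As stated it is not even well-formed, since $\mathbb{Z}[G^{\mathrm{ab}}]^\times$ is usually much larger than $\pm G^{\mathrm{ab}}$ (the determinant above is an example). The correct replacement, $u\mapsto\log(u\epsilon(u)^{-1})$, is again the statement to be proved, and you would also have to show the trace kills $SK_1$.
\end{itemize}

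\textbf{The missing idea is a direct chain-level computation, valid for all $M$ at once, using the formula you already wrote down.}
\begin{enumerate}
\item The Dennis trace sends $[M]$ to the Hochschild $1$-cycle $\tr(M^{-1}\otimes M)=\sum_{i,j}(M^{-1})_{ji}\otimes M_{ij}$.
\item Evaluation at the basepoint $LY\to Y$ is modeled by $B^{\mathrm{cyc}}G\to BG$, $(g_0;g_1,\dots,g_k)\mapsto(g_1,\dots,g_k)$. On linearized chains this applies $\epsilon$ to the zeroth tensor factor.
\item Hence the image is the bar $1$-chain $\sum_{i,j}\epsilon((M^{-1})_{ji})\,[M_{ij}]$. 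A bar $1$-chain $\sum n_g[g]$ represents $\sum n_g g$ in $H_1(G)=G^{\mathrm{ab}}$, i.e.\ its $\log$.
\item This gives $\log\tr(\epsilon(M)^{-1}M)=\log\tr(M\epsilon(M)^{-1})$, the equality holding because $\epsilon(M)^{-1}$ has central integer entries.
\end{enumerate}
Computing in Hochschild homology suffices because the $G^{\mathrm{ab}}$-summand of $\pi_1\Sigma^\infty_+Y$ is detected by the Hurewicz map to $H_1(Y)$, which kills $\{\pm1\}$.

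This is the paper's proof. With it your Step 3 becomes unnecessary, since the correction by $\epsilon(M)^{-1}$ appears automatically.
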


\begin{proof}
    The first coordinate can be computed from the naturality of $\trc$ by applying the map $Y \to *$, which sends $M \mapsto \epsilon(M)$ and then identifies $K_1(\Z) \cong \{\pm 1\}$. The formula for the second coordinate is extracted from the classical definition of the Dennis trace from e.g.~\cite{madsen_traces}, composed with the augmentation into $B^{\otimes} \Z[G]$ that applies $\epsilon$ to the first tensor factor:
    \[ \begin{tikzcd}[row sep = {1.5em}]
        B^\times GL_n(\Z[G]) \dar & (M_1,\ldots,M_k) \dar[|->] \\
        B^{\mathrm{cyc},\times} GL_n(\Z[G]) \dar & (M_0;M_1,\ldots,M_k) \dar[|->] & M_0 = M_k^{-1}\cdots M_1^{-1} \\
        B^{\mathrm{cyc},\otimes} M_n(\Z[G]) \dar & M_0 \otimes M_1 \otimes \cdots \otimes M_k \dar[|->] \\
        B^{\mathrm{cyc},\otimes} \Z[G] \dar & \sum_{1 \leq i_0,\ldots,i_k \leq n} M_0^{i_ki_0} \otimes M_1^{i_0i_1} \otimes \cdots \otimes M_k^{i_{k-1}i_k} \dar[|->] \\
        B^{\otimes} \Z[G] & \sum_{1 \leq i_0,\ldots,i_k \leq n} \epsilon(M_0)^{i_ki_0} \otimes M_1^{i_0i_1} \otimes \cdots \otimes M_k^{i_{k-1}i_k}.
    \end{tikzcd} \]
    Here $B$ refers to the bar construction and $B^{\mathrm{cyc}}$ refers to the cyclic bar construction. On first homology, this takes the class given by $M$ to the class given by the trace of the matrix $\epsilon(M^{-1})M$, which is also the trace of $M\epsilon(M)^{-1}$. (This formula for the second coordinate is also consistent with \cite[Proposition 1.2 and Definition 1.3]{GeogheganNicas1999}, which takes $M \mapsto M^{-1} \otimes M$ as the definition of the Dennis trace on $K_1$.)
\end{proof}
The final lemma describes the Becker--Gottlieb transfer itself on $\pi_1$. Actually, we don't need this lemma to verify our counterexample to the commutativity of \eqref{raptis_square}, because we only apply this homomorphism to the identity element in that case. But we will use it later when we construct an explicit pair of perfect fibrations that realizes the counterexample. It is also useful for computing $\pi_1$ of the diagram \eqref{raptis_square} in other examples.
\begin{lemma}\label{lemma3}
    For a finite-sheeted cover $X \to Y$ with fundamental groups $H = \pi_1(X)$ and $G = \pi_1(Y)$ as above, the Becker--Gottlieb transfer on $\pi_1$ is the map of abelian groups
    \[ \begin{tikzcd} \{ \pm 1 \} \oplus G^\ab \rar & \{ \pm 1 \} \oplus H^\ab \end{tikzcd} \]
    that is the sum of the following three homomorphisms:
    \begin{itemize}
        \item the homomorphism $\{ \pm 1 \} \to \{ \pm 1 \}$ raises to the power of the index $[G:H]$,
        \item the classical group theory transfer $G^\ab \to H^\ab$ originally due to Schur \cite{schur}\footnote{Explicitly, this is given by $g \mapsto \textstyle\prod_{g_i H \in G/H} g_j^{-1}gg_i$, where the $g_i$ are a fixed set of coset representatives, and for each $i$, $g_j$ is the coset representative of the coset $gg_iH \in G/H$.}, and 
        \item a cross-term homomorphism $G^\ab \to \{ \pm 1 \}$ sending $g$ to $(-1)^{|G/H| - |\left\langle g \right\rangle \backslash G/H|}$.
    \end{itemize}
\end{lemma}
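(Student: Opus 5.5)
Since both sides of the claimed formula are homomorphisms out of $\pi_1(\Sigma^\infty_+ Y)\cong\{\pm1\}\oplus G^\ab$, the plan is to compute $p^!$ separately on the two summands, using the stable Hopf map $\eta$ for the sign part and naturality of the transfer for the $G$ part.

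\textbf{Reduction to $S^1$.} Every class in $\pi_1(\Sigma^\infty_+ Y)$ is the image of $\pi_1(\Sigma^\infty_+ S^1)\cong\{\pm1\}\oplus\Z$ under a based map $\gamma\colon S^1\to Y$. Here $\gamma$ represents $g\in G$, and $\gamma$ collapses to the basepoint to produce the $\{\pm1\}$ summand. By the pullback compatibility in Remark~\ref{pullback_functoriality}, we have $p^!\circ\Sigma^\infty_+\gamma=\Sigma^\infty_+ r\circ(\gamma^*p)^!$, where $r\colon\gamma^*X\to X$ is the induced map. The pullback $\gamma^*X$ is an $n$-sheeted cover of $S^1$. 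It decomposes as a disjoint union of connected circles $C_\alpha$, one for each orbit $\alpha\in\langle g\rangle\backslash G/H$, and $C_\alpha$ covers $S^1$ with degree $d_\alpha=|\alpha|$. The transfer is additive over disjoint unions of covers, so it suffices to understand the transfer of the degree-$d$ self-cover $z\mapsto z^d$ of $S^1$ on $\pi_1$. We then push the answer forward along $r$.

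\textbf{The degree-$d$ cover of $S^1$.} Write $\pi_1(\Sigma^\infty_+ S^1)=\pi_1^s\oplus\pi_0^s\cdot[\iota]$, where $\pi_1^s=\{\pm1\}=\langle\eta\rangle$. On the summand coming from the point, pullback compatibility along $*\to S^1$ reduces us to the transfer of a $d$-point set. That transfer is $d$ times the identity on $\Sigma^\infty_+ *$ followed by inclusion, so $\eta\mapsto d\eta$; this accounts for the first bullet. On the fundamental class $[\iota]$, I would compute the transfer $\Sigma^\infty_+ S^1\to\Sigma^\infty_+ S^1$ directly. Stably $\Sigma^\infty_+S^1\simeq S\vee S^1$, and the transfer is a $2\times2$ matrix of stable maps. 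The component $S^1\to S^1$ is multiplication by $1$, since the loop lifts to a single loop going once around the cover; this gives the Schur transfer after pushforward, because $r_*[C_\alpha]=g_j^{-1}g^{d_\alpha}g_i$ assembles to Schur's formula. The component $S\to S^1$ is determined by the point summand as above. The remaining component $S^1\to S$ lies in $\pi_1^s=\Z/2$, and we must show it is $\eta^{\,d-1}$, i.e.\ $(d-1)\eta$.

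\textbf{Main obstacle.} The hard step is the cross-term $S^1\to S$. I would identify it by using the $d$-fold cover as the Thom--Pontryagin/Hopf construction: after collapsing $S^1\to S$, the transfer is the stable map $S^1_+\to S^0$ given by the framed 1-manifold $\widetilde{S^1}$ embedded in $S^1\times\mathbb R^N$. Its restriction to the top cell is the framed bordism class of the cover, which is a circle with the framing induced from the $d$-fold twisting of the trivialization. As an element of $\pi_1^s=\Omega_1^{fr}$, this class is $\eta$ when the induced framing is nontrivial relative to the Lie framing, with parity $d-1$. Alternatively, I would compute via permutations: the transfer of an $n$-sheeted cover of $S^1$ with monodromy $\sigma\in\Sigma_n$ factors through $B\Sigma_n\to Q S^0$, and on $\pi_1$ this map is the sign homomorphism $\Sigma_n\to\{\pm1\}=\pi_1^s$. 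This gives $\operatorname{sgn}(\sigma)=(-1)^{n-\#\text{cycles}}$ directly, and $n-\#\text{cycles}$ equals $|G/H|-|\langle g\rangle\backslash G/H|$, which is exactly the third bullet.

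I would adopt this second route, via Barratt--Priddy--Quillen and the fact that $\pi_1 QS^0\cong\Sigma_\infty^{\ab}$ via sign, as the main argument. The one thing to check is that the transfer of the universal $n$-sheeted cover over $B\Sigma_n$ is the standard map $B\Sigma_n\to QS^0$, and this is classical.
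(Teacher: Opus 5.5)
Your proposal is correct, and its skeleton matches the paper's. You reduce along a loop via the pullback compatibility of Remark~\ref{pullback_functoriality} (the paper first passes to the universal cover $EG/H\to EG/G$). You read off the index power from the fiber over the basepoint, and the Schur transfer from how the loop lifts.

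Two small points in that part:
\begin{itemize}
\item You should state that the $\{\pm1\}\to H^{\ab}$ component vanishes. It follows from your point computation, since the fiber points are joined by paths in the connected space $X$.
\item Your orbit-by-orbit lifting needs the standard telescoping identity that the Schur factors along an orbit multiply to $g_i^{-1}g^{d}g_i$ in $H^{\ab}$. So your $g_j^{-1}g^{d_\alpha}g_i$ should read $g_i^{-1}g^{d_\alpha}g_i$. The paper instead simply closes up the $[G:H]$ lifted paths through the basepoint.
\end{itemize}

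The real difference is the cross-term. The paper splits the circle cover into orbits and quotes the classical fact that the $n$-fold cover $S^1\to S^1$ contributes $(n-1)\eta$. You instead classify the whole cover by its monodromy $\sigma\in\Sigma_n$. You then identify the cross-term with the image of $\sigma$ under $\pi_1$ of the universal transfer $B\Sigma_n\to Q_nS^0$, namely $\operatorname{sgn}(\sigma)$. This explains the formula conceptually: the cross-term is the sign of the monodromy, and the orbit count is just the cycle count. It also proves the fact the paper cites, at the cost of invoking Barratt--Priddy--Quillen.

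You can lighten that input. The map $\Sigma_n\to\pi_1^s\cong\Z/2$ is a homomorphism, so it is either trivial or the sign. Deciding which is exactly the $d=2$ case of the paper's classical computation, so the two routes rest on inputs of similar strength.

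Your framed-bordism alternative, as written, is only an assertion about framings and would not stand alone, but you do not rely on it.
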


\begin{proof}
    Every finite-sheeted cover is a pullback of one of the form $EG/H \to EG/G$ with $EG$ a free contractible $G$-CW complex, so using Remark \ref{pullback_functoriality}, we may without loss of generality assume our covering space is $EG/H \to EG/G$. The first homomorphism is easily computed by the functoriality of the transfer recalled in Remark \ref{pullback_functoriality}, and by the same functoriality the homomorphism $\{\pm 1\} \to H^\ab$ must vanish. The second homomorphism arises by lifting the loop $g$ in $BG$ to $[G:H]$ different paths in $EG/H$, the $i$th one going from $g_i H$ to $g_j H$. The homology class of this chain is then the sum of the loops we get by connecting each path back to the basepoint $1H$, hence the terms $g_j^{-1}gg_i$. The third homomorphism is computed by pulling back the covering space to a single circle $S^1 \subseteq BG$, giving a covering space $P \to S^1$ with fiber $G/H$ and monodromy given by left multiplication by $g$. For each orbit of size $n$, this contributes $(n-1)$ copies of the Hopf map $\eta$, by the classical computation of the transfer for the $n$-fold covering map $S^1 \to S^1$. Adding this up over all orbits (of which there are $|\left\langle g \right\rangle \backslash G/H|$), we get $(|G/H| - | \left\langle g \right\rangle \backslash G/H|)$ copies of the Hopf map.
\end{proof}

The above lemmas were proven earlier by the authors in unpublished work as they searched for counterexamples; the input from an LLM was to locate the following specific counterexample. Let
\[
\begin{aligned}
G&=\langle a,s \mid a^3=s^{16}=1,\ sas^{-1}=a^{-1}\rangle,\\
H&=\langle s \mid s^{16} = 1 \rangle\cong  C_{16}.
\end{aligned}
\]
We think of $H$ as a subgroup of $G$ with index $[G:H]=3$, and we identify both of their abelianizations with $\Z/16$. The inclusion $H \leq G$ induces a map $p\colon  BH \to BG$, which is a 3-sheeted cover. Let
\[
\begin{array}{rclcl}
    \beta &=& (s^8-1)(s + s^{-1}) = -s + s^7 + s^9 - s^{15}, \\
    \alpha &=& 1 + (1 - s^8)(a - 1)(1 - s - s^{-1}) \\
    &=& s^8 - \beta + a(1 - s^8 + \beta) \\
    &=& s^8 - \beta + (1 - s^8)a + \beta a^2 & \textup{in} & \Z[G].
\end{array}
\]
Here $\beta$ has been chosen so that it consists only of odd powers of $s$, so that $a\beta = \beta a^2$, while the rest of the terms commute with $a$. This makes the calculation easier to verify.

\begin{theorem} \label{counterexample}
    The element $\alpha$ is a unit in $\Z[G]$, and therefore defines a class $[\alpha] \in K_1(\Z[G])$. On this class, the following square fails to commute:
\[\begin{tikzcd}
	{K_1(\mathbb{Z}[G])} & {K_1(\mathbb{Z}[H])} \\
	{\{\pm1\} \oplus G^\mathrm{ab}} & {\{\pm1\} \oplus H^\mathrm{ab}}
	\arrow["{\Res}", from=1-1, to=1-2]
	\arrow["{\trc_G}"', from=1-1, to=2-1]
	\arrow["{\trc_H}", from=1-2, to=2-2]
	\arrow["p^!", from=2-1, to=2-2]
\end{tikzcd}\]
    Specifically, we have $p^! \trc_G[\alpha]  = (+1,0)$ and $\trc_H\Res[\alpha] = (+1,8)$ as elements of $\{\pm 1\} \oplus \Z/16$.
\end{theorem}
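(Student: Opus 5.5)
We would verify the three assertions in turn: that $\alpha$ is a unit, that $p^!\trc_G[\alpha]=(+1,0)$, and that $\trc_H\Res[\alpha]=(+1,8)$.

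\textbf{Unit.} We want to exhibit an explicit inverse. The first thing to exploit is the factor $1-s^8$: it is central and satisfies $(1-s^8)^2=2(1-s^8)$. Write $\alpha=1+(1-s^8)y$ with $y=(a-1)(1-s-s^{-1})$. Then any inverse can be sought of the form $\alpha'=1+(1-s^8)z$, and $\alpha\alpha'=1$ reduces to the single equation
\[ (1-s^8)\bigl(y+z+2yz\bigr)=0. \]
This says we only need to invert $1+2y$ in the quotient ring $\Z[G]/(1+s^8)$, where $s^8=-1$. I would first look for a short $z$ built from $a$, $a^2$ and low odd powers of $s$, using the relations $a\beta=\beta a^2$ and $s^8a=as^8$ that motivated the choice of $\beta$.

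If guessing fails, I would fall back on a determinant argument. Use Lemma~\ref{lemma1}: left multiplication by $\alpha$ on the free right $\Z[H]$-module $\Z[G]$ with basis $1,a,a^2$ is a $3\times 3$ matrix over the commutative ring $\Z[C_{16}]$. Then $\alpha$ is a unit if and only if this matrix is invertible, and since the ring is commutative this holds if and only if its determinant is a unit of $\Z[C_{16}]$. One can check the determinant is a unit by showing it is $\pm s^k$, or by producing its inverse. I expect this unit verification to be the main computational obstacle.

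\textbf{Left-bottom path.} Since $a\mapsto a^{-1}=a^2$ in $G^\ab$ and $a^3=1$, the class of $a$ in $G^\ab\cong\Z/16$ is $0$, while $s\mapsto 1$. Applying $\epsilon$ gives
\[ \epsilon(\alpha)=1-\epsilon(\beta)+0+\epsilon(\beta)=1, \]
so Lemma~\ref{lemma2} applies with $M=\alpha$ as a $1\times1$ matrix. Taking $\log$ of the four summands $s^8-\beta$, $(1-s^8)a$ and $\beta a^2$ gives
\[ \log\alpha=(8-\log\beta)+(0-8)+\log\beta=0. \]
Hence $\trc_G[\alpha]=(+1,0)$. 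Any homomorphism sends the identity to the identity, so $p^!\trc_G[\alpha]=(+1,0)$, and Lemma~\ref{lemma3} is not even needed here.

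\textbf{Top-right path.} Write $\alpha=c_0+a\,c_1+a\,\beta$, where $c_0=s^8-\beta$, $c_1=1-s^8$, and the rewriting of the last term uses $\beta a^2=a\beta$. Using the commutation rules $a^i h=h a^{\pm i}$ (with the sign depending on the parity of the $s$-exponents in $h$), I would compute each column $\alpha\cdot a^i$ as $\sum_j a^j h_{ji}$ with $h_{ji}\in\Z[H]$. This gives $\Res[\alpha]=M$.

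Next I would compute $\epsilon(M)$, meaning $\epsilon_H$ applied entrywise. Because $\epsilon$ kills $s^8-1$ and $\epsilon(\beta)=0$, this should be the identity matrix. Then Lemma~\ref{lemma2} gives $\trc_H[M]=(+1,\log\tr M)$, and only the diagonal entries $h_{00},h_{11},h_{22}$ are needed.
\begin{itemize}
\item $h_{00}=s^8-\beta$.
\item For $h_{11}$ and $h_{22}$, the twisting $\beta a=a^2\beta$ moves the $\beta$ contributions off the diagonal or flips their sign, while the $s^8$ terms survive.
\end{itemize}
Summing the logs in $H^\ab=\Z/16$, I expect the $\beta$ contributions to cancel (each has $\log\beta\in\{0,\pm\text{const}\}$ with cancelling signs) and the $s^8$ terms to leave a net $8$. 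That yields $\trc_H\Res[\alpha]=(+1,8)\neq(+1,0)$. The bookkeeping of which $\beta$-terms land on the diagonal is the second place I would check carefully.
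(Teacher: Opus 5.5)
Your route is the paper's: the matrix of left multiplication by $\alpha$ over $\Z[C_{16}]$ in the basis $1,a,a^2$, its determinant for invertibility, and $\epsilon$ and $\log$ for the two traces. Your bottom-left computation is complete and correct; computing $\log$ term by term is equivalent to the paper's passage to $\Z[G^{\ab}]$, where $a\mapsto 1$ gives $\alpha_{\ab}=1$.

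\textbf{The gap: the unit property.} The claim that $\alpha$ is a unit is the one non-formal ingredient of the theorem, and you do not establish it. Moreover, both shortcuts you propose would fail.
\begin{itemize}
\item Since $\alpha\equiv 1$ modulo $(1-s^8)$, the inverse is $1+(1-s^8)z$ with $z$ unique modulo $(1+s^8)$. The actual $z=(a-1)\bigl[3-5(s+s^{-1})-2(s^2+s^{-2})+12(s^3+s^{-3})+(14-10s^2-10s^{-2})a^{-1}\bigr]$ still has nonzero $s^2$ and $s^6$ coefficients in $\Z[G]/(1+s^8)$. So an ansatz in which $s$ occurs only to odd powers cannot find it.
\item The determinant is $\det\Res(\alpha)=4-3s^8-3\beta^2=-8-6s^2+6s^6+9s^8+6s^{10}-6s^{14}$. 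This is not of the form $\pm s^k$.
\end{itemize}
So the remaining option you list, producing an inverse, is exactly what has to be done. The inverse is $-20+21s^8+3\beta^2$. To check it, use $s^8\beta=-\beta$ (hence $s^8\beta^2=-\beta^2$) to write the product as $-143+144s^8+144\beta^2-9\beta^4$. Then $\beta^4=-16+16s^8+16\beta^2$ reduces this to $1$. Your reduction to inverting $1+2y$ in $\Z[G]/(1+s^8)$ is valid, but it does not by itself make the inversion easier.

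\textbf{The top-right path.} Your qualitative prediction is right, but the computation is short and should be written out:
\begin{itemize}
\item $\alpha\cdot 1=(s^8-\beta)+a(1-s^8+\beta)$,
\item $\alpha\cdot a=\beta+a\,s^8+a^2(1-s^8-\beta)$,
\item $\alpha\cdot a^2=(1-s^8)-a\beta+a^2(s^8+\beta)$.
\end{itemize}
Every off-diagonal entry has augmentation $0$, so $\epsilon(M)=I$. The diagonal is $s^8-\beta$, $s^8$, $s^8+\beta$, so $\tr M=3s^8$ and $\log\tr M=24\equiv 8\pmod{16}$. The $8$ thus comes from three copies of $s^8$. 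Since $\log\beta=-1+7+9-15=0$, no bookkeeping of the $\beta$ terms is needed at all. This explicit $M$ is also what you need to compute the determinant above, so one calculation serves both halves of the proof.
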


\begin{proof}
    Let us first remark that the inverse of $\alpha$ can be given explicitly as
    \[ \alpha^{-1} = 1+(1-s^8)(a-1)[3-5(s+s^{-1})-2(s^2+s^{-2})+12(s^3+s^{-3})+(14-10s^2-10s^{-2}) a^{-1}]. \]
    One can verify by direct calculation that $\alpha\alpha^{-1} = \alpha^{-1}\alpha = 1$. Later in the proof we will also give a second, more indirect method to check that $\alpha$ is invertible.
    
    We compute $\trc_G[\alpha]$ next. Note that $\epsilon(\alpha) = 1$, since $\epsilon$ is a ring homomorphism and $\epsilon(a-1) = 0$. Therefore the first coordinate of $\trc_G[\alpha]$ is $+1$, and the remaining coordinates are given by $\log(\alpha)$. To compute $\log(\alpha)$ we first pass from $\Z[G]$ to $\Z[G^\ab]$, which sends $a$ to $1$ and therefore kills $(a-1)$, hence $\alpha_{\mathrm{ab}} = 1$. Therefore $\trc_G[\alpha] = (+1,0) $. This is the identity element in $\{ \pm 1 \} \oplus \mathbb{Z}/16$ and hence applying $p^!$ gives the identity element $(+1,0) \in \{ \pm 1 \} \oplus \mathbb{Z}/16$. 

    We compute $\Res[\alpha]$ as the $K$-theory class of the $(3 \times 3)$ matrix $\Res(\alpha)$ given below. To be more specific, this is the matrix for multiplication \emph{on the left} by $\alpha$ on $\Z[G]$, using the basis for $\Z[G]$ as a right $\Z[H]$-module given by $\{1,a,a^2\}$.
\[
\operatorname{Res}(\alpha)=M=
\begin{pmatrix}
  s^8 - \beta & \beta & 1 - s^8 \\
  1 - s^8 + \beta & s^8 & -\beta \\
  0 & 1 - s^8 - \beta & s^8 + \beta
\end{pmatrix}.
\]
Notice the descending pattern of $s^8$ and $1-s^8$ in each column, and a corresponding ascending pattern of one copy of $-\beta$ and one copy of $\beta$ in each column; these move in opposite directions because of the relation $\beta a = a^{-1}\beta$.

The augmentation $\epsilon(\Res(\alpha))$ is the identity matrix, which we can easily calculate using the fact that $\epsilon(\beta) = 0$ and $\epsilon(s^8) = 1$. Therefore the first coordinate of $\trc_H\Res[\alpha]$ is $+1$, and the remaining coordinate is given by the log of the trace of $\Res(\alpha)$. The trace of $\Res(\alpha)$ is $3 s^8$ and hence the logarithm is $3 \cdot 8 = 24 \equiv 8 ~\text{mod}~16$. This finishes the calculation that $\trc_H\Res[\alpha] = (+1,8)$.

Finally, we can describe the second proof that $\alpha$ is invertible. It suffices to show that the matrix $\Res(\alpha)$ is invertible, since this matrix also describes the left multiplication action of $\alpha$ on one copy of $\Z[G]$. Since the coefficients for this matrix are in the commutative ring $\Z[C_{16}]$, we can take its determinant, which gives $\det(\Res(\alpha)) = 4 - 3s^8 - 3\beta^2$. We remark on the useful identity $s^8 \beta = - \beta$ for this computation.
We claim that $-20+21s^8+3\beta^2$ is the inverse in $\Z[C_{16}]$. Their product is $-143 + 144s^8 + 144\beta^2 - 9\beta^4$, but we can simplify this using the expressions
\[
\begin{array}{rcl}
    \beta^2 &=& 4 + 2s^2 - 2s^6 - 4s^8 - 2s^{10} + 2s^{14}, \\
    \beta^4 &=& 48 + 32s^2 - 32s^6 - 48s^8 - 32s^{10} + 32s^{14}.
\end{array}
\]
From this we deduce that $\beta^4 = -16 + 16s^8 + 16\beta^2$. This simplifies the product of $4 - 3s^8 - 3\beta^2$ and $-20+21s^8+3\beta^2$ to the expression
\[ -143 + 144s^8 + 144\beta^2 + 9(16 - 16s^8 - 16\beta^2) = 1. \]
Therefore $\Res(\alpha)$ is invertible, so $\alpha$ is invertible.
\end{proof}

\begin{remark}
    We have not described the provenance of the unit $\alpha$, and indeed this is one of the more difficult things to ascertain when conducting a computational search using an LLM. As far as we can tell, the unit has been constructed by the LLM via heuristics that suggest that a counterexample could come from a unit of the form $1 + (1 - s^8)(a - 1)(x + y(s + s^{-1}))$, then further tests are carried out to determine which integers $x$ and $y$ actually produce a counterexample. The heuristics used by the LLM are related to the methods used in \cite{jps_heuristic,bass_units_generalized}. The fact that the determinant of $\Res(\alpha)$ is invertible can also be interpreted using Bass cyclic units \cite{bass_units}, see also \cite{bass_units_new}. 

    Despite these connections, we are unable to find any direct source in the literature for these heuristics, making this an apparent example of genuine thinking and not mere copying on the part of the LLM. But we cannot be certain of this -- we are limited in our search by what the LLM chooses to share with us, and also by the limitations of a traditional web search.
\end{remark}

\section{An explicit counterexample to functoriality}

At this point we have proven the main theorem (Theorem \ref{main_thm}). But we can go further and distill out of $\alpha \in \Z[G]$ an explicit pair of perfect fibrations $p$ and $q$ such that $(q \circ p)^! \neq p^! \circ q^!$, by reverse-engineering Raptis's proof. (This step was also first carried out with the assistance of an LLM, but we like the following human version of the story better.)

Suppose $G$ is any finitely presented group, $H \leq G$ is any finite-index subgroup, and $\alpha \in \Z[G]^\times$ is any unit in the group ring. Fix any finite based CW complex $K$ such that $\pi_1(K) \cong G$, and let $V = K \vee S^2$. The element $\alpha = \textstyle\sum_i n_ig_i$ corresponds to a homotopy class of maps $\phi_\alpha\colon V \to V$ that is the identity along $K$, that pinches the extra copy of $S^2$ into finitely many copies of $S^2$, one for each non-zero term $n_ig_i$ in $\alpha$, and that sends each of these copies of $S^2$ back to $S^2$ by a map of degree $n_i$, together with the action of $g_i \in \pi_1(K)$ on it. This is well-defined up to homotopy, and the composition of maps of this form corresponds to multiplication in $\Z[G]$. Therefore, if $\alpha$ is a unit then $\phi_\alpha\colon V \to V$ is a homotopy equivalence. (However, crucially, $\phi_\alpha$ need not be a simple homotopy equivalence. Its Whitehead torsion, given by the class of $\alpha$ in $\mathrm{Wh}(G) = K_1(\mathbb{Z}[G])/ \{\pm G \}$, can be non-zero.) 

Let $Y$ be the mapping torus of $\phi_\alpha\colon V \to V$, and let $q\colon Y \to S^1$ be the projection to the circle coordinate. Since $\phi_\alpha$ acts trivially on $\pi_1(V)$, we have $\pi_1(Y) \cong G \times \Z$. Let $t$ denote the generator of $\pi_1(S^1)$, and also of this extra $\Z$ factor in $\pi_1(Y)$. Finally, let $X \to Y$ be the finite-sheeted covering space corresponding to the inclusion $H \times \Z \leq G \times \Z$.

\begin{theorem}
	When the perfect fibrations $p$ and $q$ are built in this way, we have
	\[ p^!q^![t] = (p^! \circ \trc_G[\alpha],[G:H] \cdot \chi(V)) \quad \textup{and} \quad (q \circ p)^![t] = (\trc_H \circ \Res[\alpha],[G:H] \cdot \chi(V)) \]
	as elements of $\pi_1(\Sigma^\infty_+ X) \cong (\{\pm 1\} \oplus H^\ab) \times \Z$. Here $\chi(V)$ denotes the Euler characteristic of $V$.
\end{theorem}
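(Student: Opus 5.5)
We compute $q^![t]$ first, using the known description of the Becker--Gottlieb transfer for a fibration over $S^1$ with finite fiber. Then we push it forward by $p^!$ using Lemma \ref{lemma3}, and separately compute $(q\circ p)^![t]$ by the same method applied to the composite.

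\emph{Step 1: transfers of fibrations over $S^1$.} Let $E \to S^1$ be a fibration whose fiber is a finite CW complex $F$ and whose monodromy is $f\colon F \to F$. I will use the formula, essentially due to Becker--Gottlieb and made explicit in e.g.~the work of Geoghegan--Nicas \cite{GeogheganNicas1999}, for the class $[t] \in \pi_1(\Sigma^\infty_+ S^1)$ under the transfer. The class $t$ lifts to $\pi_1(\Sigma^\infty_+ E) \cong \{\pm 1\}\oplus \pi_1(E)^\ab$, with $\pi_1(E)$ based in $F$. In the splitting $\pi_1(E)^{\ab}\cong \pi_1(F)_{f}^{\ab}\oplus\Z$, it is given by the pair
\[ \big(\trc_{\pi_1 F}(\text{torsion class of }f) \ \text{restricted to } \pi_1(F)^\ab,\ \ \chi(F)\ \text{in the } \Z\text{-coordinate}\big), \]
the latter being the Lefschetz number of $f$. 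More precisely, the first component is the ``one-parameter fixed point'' or Reidemeister trace of the mapping torus. For a cellular model of $f$ it equals $\trc$ applied to the $K_1$-class of the chain map $f_*$ on cellular chains of the universal cover, that is, the alternating product of the matrices of $f$ in each degree.

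To justify this, I would use the formal description of the transfer as a trace. Decompose $S^1$ as a $1$-cell attached to a point, and use additivity of traces in parametrized spectra (the transfer is the trace of the diagonal). This expresses $q^![t]$ as the trace of $f$ acting on $\Sigma^\infty_+$ of the universal-cover-twisted fiber, twisted by $t$. Computing that trace on cellular chains gives the Dennis-type trace of $\sum (-1)^i [f_i]$.

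\emph{Step 2: apply to $Y$ and $X$.} For $V=K\vee S^2$ and $\phi_\alpha$, the cellular chain map is the identity on the cells of $K$ and multiplication by $\alpha$ on the single $2$-cell. So $q^![t]=(\trc_G[\alpha],\chi(V))$, where the $\{\pm1\}\oplus G^\ab$ part comes from Lemma \ref{lemma2}. The identity cells contribute trivially because their trace is a sum of $\pm 1$ terms landing in the $\{\pm1\}$ and $\Z$ parts. Then Lemma \ref{lemma3}, applied to the cover $X\to Y$ with group pair $H\times\Z\le G\times\Z$, gives $p^!q^![t]$. On the $G^\ab$ part it is $p^!$, and on the $\Z$ part $t$ has $3$ fixed cosets, so $t\mapsto t^{[G:H]}$. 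This gives $[G:H]\chi(V)$, with no cross term since $\langle t\rangle$ acts trivially on $G/H$.

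For $q\circ p\colon X\to S^1$, the fiber is the $[G:H]$-sheeted cover $\tilde V$ of $V$, with Euler characteristic $[G:H]\chi(V)$. Its monodromy lifts $\phi_\alpha$, and its cellular chain map on the $H$-cover is $\Res$ of the chain map of $\phi_\alpha$. Step 1 then gives $(\trc_H\Res[\alpha],[G:H]\chi(V))$.

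The main obstacle is Step 1: rigorously identifying the transfer of a mapping-torus fibration with the trace of the $K_1$-class of the monodromy. I would try to derive it from Remark \ref{pullback_functoriality} together with additivity/cyclicity of the bicategorical trace. Alternatively, I would cite Geoghegan--Nicas, and check that the signs and the basepoint conventions match those of Lemma \ref{lemma2}.
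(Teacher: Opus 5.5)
Your outline matches the paper's. Everything is reduced to the class $[\alpha t^{\chi(V)}]\in K_1(\Z[G\times\Z])$, after which Lemmas~\ref{lemma1}--\ref{lemma3} finish the computation. Treating $q\circ p$ directly (fiber the $H$-cover of $V$, chain map $\Res$ of that of $\phi_\alpha$) instead of using $\tr_{qp}=\tr_p\circ\tr_q$ and Lemma~\ref{lemma1} is a harmless variant, since $X$ and $Y$ have the same universal cover.

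\textbf{The gap is Step~1.} It carries all of the content, and you do not establish it. The paper gets it from \cite[Cor.~1.3]{lm} (see also \cite[Thm~E]{CarmeliCnossenRamziYanovski2025}): for a perfect fibration $q\colon E\to B$, the transfer factors as $q^!=\trc_E\circ\tr_q\circ(\Sigma^\infty_+B\to A(B))$. Given this, Step~1 is a concrete computation:
\begin{itemize}
\item $[t]$ maps to the zig-zag $*\to I\leftarrow *$ in $\pi_1A(S^1)$;
\item its $A$-theory transfer is $V\to V\times I\leftarrow V$ over $Y$;
\item on cellular chains of $\widetilde Y$, the cells of $K$ contribute $t^{\chi(K)}$ and the extra $2$-cell contributes $\alpha t$;
\item Lemma~\ref{lemma2} then gives $q^![t]=(\trc_G[\alpha],\chi(V))$.
\end{itemize}
The paper uses Geoghegan--Nicas only for the formula for the Dennis trace on $K_1$, not for any statement about Becker--Gottlieb transfers. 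So that citation does not close the gap as it stands.

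Your fallback sketch does not close it either. The trace of the monodromy $f$ on the $t$-twisted fiber is a degree-$0$ invariant, the Reidemeister trace. The evident way to promote it to a class in $\pi_1(\Sigma^\infty_+E)$ sends each twisted class $g$ to the loop $gt$. This gives $\sum_i(-1)^i\log\tr(tA_i)$, whose $t$-coefficient is the Lefschetz number $L(f)$ --- which is how you describe the $\Z$-coordinate.

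But the $t$-coefficient of $q^![t]$ is always $\chi(F)$, because $q_*\circ q^!$ is multiplication by $\chi(F)$ on ordinary homology. For the mapping torus of the antipodal map of $S^2$, $L(f)=0$ while this coefficient is $2$.

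The correct answer is the Dennis trace of Lemma~\ref{lemma2}. Its correction by $\epsilon(M)^{-1}$ is exactly what turns $L(f)$ into $\chi(F)$. Identifying the parametrized trace of the diagonal with this degree-$1$ Dennis trace of a $K_1$-class is precisely the nontrivial input supplied by \cite{lm}. Your additivity argument over a cell structure on $S^1$ gives no mechanism for producing it.

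In your example $L(\phi_\alpha)=\chi(V)$, so your final numbers come out right. Still, Step~1 remains an unproved input rather than part of a proof, and its heuristic description is misidentified.
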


\begin{proof}
	Consider the following diagram, where the bottom-most map is the canonical inclusion of stable homotopy into $A$-theory.
	   \begin{equation}\label{two_transfers}
        \begin{tikzcd}
	 	& A(X) \rar{\trc_X} & \Sigma^\infty_+ X \\
	 	& A(Y) \rar{\trc_Y} \uar{\tr_p} & \Sigma^\infty_+ Y \uar{p^!} \\
		\Sigma^\infty_+ S^1 \rar & A(S^1) \uar{\tr_q}
	\end{tikzcd}
    \end{equation}
     The vertical maps in the middle are transfer maps on $A$-theory. Unlike the Becker--Gottlieb transfer, the transfer on $A$-theory is easily seen to be functorial, using the fact that pasting pullback squares produces a pullback square \cite[Section 3]{Raptis2021TransferMaps}. In other words, the composite $\tr_p \circ \tr_q$ is equal to $\tr_{q \circ p}$ in the homotopy category.
    However, the square on the right does not commute in general, and therefore the two routes through this diagram gives two different maps $\Sigma^\infty_+ S^1 \to \Sigma^\infty_+ X$.
    
    By a result of John Lind and the second author (\cite[Corollary 1.3]{lm}, see also \cite[Thm E]{CarmeliCnossenRamziYanovski2025}), the Becker--Gottlieb transfer $(q \circ p)^!$ is the composite along the upper route of \eqref{two_transfers}.
    Similarly $q^!$ is the composite from $\Sigma^\infty_+ S^1$ to $\Sigma^\infty_+ Y$, and therefore $p^!q^!$ is the composite along the lower route of \eqref{two_transfers}. Therefore, we only have to compute what each map in the diagram \eqref{two_transfers} does to the element
    \[ [t] = (+1,1) \in \pi_1( \Sigma^\infty_+ S^1) \cong \{\pm 1\} \oplus \Z. \]
    
    First this goes to the element of $\pi_1(A(S^1))$ represented by the self-equivalence of the one-point space over $S^1$ that moves the point around the circle using a zig-zag of the form $* \to I \leftarrow *$. Then we apply transfer to $\pi_1(A(Y))$. This gives the zig-zag $V \to V \times I \leftarrow V$ over the mapping torus $Y$, which applies the map $\phi_\alpha$ on the left and the identity of $V$ on the right. To get an element of $K_1(\Z[G \times \Z])$ out of this, we pull back to the universal cover $\widetilde Y$, which has a right $(G \times \Z)$-action, pass to cellular chains, then take the alternating sum of the resulting invertible $\Z[G \times \Z]$-matrices as elements of $K_1(\Z[G \times \Z])$.
    
    The pullback $V \times_Y {\widetilde Y}$ is a disjoint union of $\Z$ copies of $\widetilde V$, and the induced map on this space applies $\widetilde \phi_\alpha\colon \widetilde V \to \widetilde V$ to each copy and then increments the integer in $\Z$ by one. Taking chains to get a map of right $\Z[G \times \Z]$-modules, each cell in $K$ contributes a map that multiplies on the left by $t$, and therefore after accounting for signs these all contribute a $(1 \times 1)$ invertible matrix with entry $t^{\chi(K)}$. The remaining 2-cell is sent to itself with multiplication by $\alpha t \in \Z[G \times \Z]$. So all together, we get the element $[\alpha t^{\chi(V)}] \in K_1(\Z[G \times \Z])$.
	
	Now that we know where the element $[t]$ goes in $\pi_1(A(Y))$, we can apply the formulas from Lemmas \ref{lemma1}, \ref{lemma2}, and \ref{lemma3} to get its image under $(q \circ p)^!$ and $p^!q^!$. Since these are all homomorphisms, we can apply them to the terms $[\alpha]$ and $[t^{\chi(V)}] = \chi(V)[t]$ separately, and then add the results together at the end.
\end{proof}

\begin{corollary}
	For the choice of $G$, $H$, and $\alpha$ made in Theorem \ref{counterexample} above, we have $p^!q^![t] = (+1,0,3\chi(V))$ and $(qp)^![t] = (+1,8,3\chi(V))$, as elements of $\pi_1(\Sigma^\infty_+ X) \cong \{\pm 1\} \oplus \Z/16 \oplus \Z$.
\end{corollary}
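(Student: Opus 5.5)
The plan is to substitute the specific data of Theorem \ref{counterexample} into the formulas of the preceding theorem. That theorem already gives
\[ p^!q^![t] = (p^! \circ \trc_G[\alpha],\ [G:H]\cdot\chi(V)), \qquad (q\circ p)^![t] = (\trc_H\circ\Res[\alpha],\ [G:H]\cdot\chi(V)). \]
We also know $[G:H]=3$, so the $\Z$-coordinate is $3\chi(V)$ in both cases. What remains is to evaluate the first two coordinates and to be careful about the identifications of groups.

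The one point needing care is bookkeeping. In the theorem, the $K_1$ classes and traces live over $G\times\Z$ and $H\times\Z$, while Theorem \ref{counterexample} is stated over $G$ and $H$. I would observe that $[\alpha]$ lies in the image of $K_1(\Z[G])\to K_1(\Z[G\times\Z])$. By Lemma \ref{lemma1}, restriction along $H\times\Z\le G\times\Z$ uses the coset representatives $\{1,a,a^2\}$, which live in $G$. Hence it commutes with the inclusion and produces the same $3\times 3$ matrix $M$ with entries in $\Z[H]\subseteq \Z[H\times\Z]$.

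Lemma \ref{lemma2} is natural in the group: $\log$ and $\epsilon$ commute with the inclusion $G\to G\times\Z$, so the traces land in $\{\pm1\}\oplus G^\ab\oplus 0$. For $p^!$ on $\{\pm1\}\oplus (G\times\Z)^\ab$, we only evaluate it on the identity element, since $\trc_G[\alpha]=(+1,0)$. Any homomorphism sends the identity to the identity, so Lemma \ref{lemma3} is not even needed for this term.

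Plugging in the values from Theorem \ref{counterexample} then gives
\[ p^!q^![t]=(+1,0,3\chi(V)), \qquad (qp)^![t]=(+1,8,3\chi(V)), \]
using $H^\ab\cong\Z/16$. The only mild obstacle is justifying the compatibility of $\Res$ and $\trc$ with the inclusion of the extra $\Z$ factor. This follows from naturality of the Dennis trace and the explicit description in Lemma \ref{lemma1}.
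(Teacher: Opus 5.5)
Your proposal is correct and is essentially the paper's argument: the corollary comes from substituting $[G:H]=3$ and the values $p^!\trc_G[\alpha]=(+1,0)$ and $\trc_H\Res[\alpha]=(+1,8)$ from Theorem~\ref{counterexample} into the preceding theorem. Your extra bookkeeping about the $\Z$ factor is a sound clarification of a step the paper leaves implicit: $\{1,a,a^2\}$ still serves as a basis for $H\times\Z\le G\times\Z$, $\epsilon$ and $\log$ are natural, and $p^!$ is only evaluated on the identity.
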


Therefore we have constructed an explicit pair of perfect fibrations $p$ and $q$ such that $(q \circ p)^!$ and $p^! \circ q^!$ do not give the same homomorphism on $\pi_1$.

\begin{remark}
    We used $K$-theory to disprove the functoriality of the Becker--Gottlieb transfer. This might seem like an odd trick, but it is really more than just a trick. In retrospect, it was the correct way to attack the problem all along. What makes the counterexample run is that the monodromy of $Y \to S^1$ has nonzero Whitehead torsion $\alpha$, and is therefore not a simple homotopy equivalence. If it were a simple homotopy equivalence, then its Dwyer--Weiss--Williams index from \cite{dww} would vanish, allowing us to replace $Y \to S^1$ with a smooth fiber bundle with compact fiber. Then we would have $(q \circ p)^! = p^! \circ q^!$ by Dold's argument in \cite[Theorem 8.7]{dold_fixed_point_transfer}.
    
    The difference between simple and non-simple homotopy equivalences is measured by the Whitehead group $K_1(\Z[G])/(\{\pm 1\} \oplus G^\ab)$, so $K$-theory is exactly where we should look for algebraic data that can break the functoriality of the transfer.
\end{remark}

\bibliographystyle{amsalpha}
\bibliography{mybib}{}

\end{document}